\documentclass[reqno,11pt]{amsart}

\headheight=8pt     \topmargin=0pt
\textheight=650pt   \textwidth=432pt
\oddsidemargin=18pt \evensidemargin=18pt

\numberwithin{equation}{section}

\pagestyle{headings}
\usepackage{verbatim} 
\usepackage{xspace} 
\usepackage{amssymb}
\usepackage{amsmath}
\usepackage{amsthm} %
\usepackage[latin1]{inputenc}
\usepackage{graphicx}
\usepackage{amscd,amssymb,euscript,graphics}
\makeindex

\parindent = 0pt
\parskip = 0.3cm


\newtheorem{theorem}{Theorem}[section]
\newtheorem{prop}[theorem]{Proposition}
\newtheorem{lem}[theorem]{Lemma}

\newtheorem{df}[theorem]{Definition}




\newcount\theTime
\newcount\theHour
\newcount\theMinute
\newcount\theMinuteTens
\newcount\theScratch
\theTime=\number\time
\theHour=\theTime
\divide\theHour by 60
\theScratch=\theHour
\multiply\theScratch by 60
\theMinute=\theTime
\advance\theMinute by -\theScratch
\theMinuteTens=\theMinute
\divide\theMinuteTens by 10
\theScratch=\theMinuteTens
\multiply\theScratch by 10
\advance\theMinute by -\theScratch

\def\today{{\number\day\space
 \ifcase\month\or
  January\or February\or March\or April\or May\or June\or
  July\or August\or September\or October\or November\or December\fi
 \space\number\year}}

\begin{document}

\title[Random Regular Graphs are not Gromov Hyperbolic]{Random Regular Graphs are not Asymptotically Gromov Hyperbolic}

\author{Gabriel H. Tucci} 
\thanks{G. H. Tucci are with Bell Laboratories, Alcatel-Lucent, 600 Mountain Avenue, Murray Hill, New Jersey 07974, USA. 
E-mail: gabriel.tucci@alcatel-lucent.com}

\begin{abstract}
In this paper we prove that random $d$--regular graphs with $d\geq 3$ have traffic congestion of the order $O(n\log_{d-1}^{3}(n))$ where $n$ is the number of nodes and geodesic routing is used. We also show that these graphs are not asymptotically $\delta$--hyperbolic for any non--negative $\delta$ almost surely as $n\to\infty$.
\end{abstract}

\maketitle

\section{Introduction and Motivation}
Gromov hyperbolicity is important not only in group theory, coarse geometry, differential geometry \cite{bridson, Gromov} but also in many applied fields such as communication networks \cite{10, NS1}, cyber security \cite{jonck} and statistical physics \cite{11}. Hyperbolicity is observed in many real world networks such as the Internet \cite{jonck, 9} and data networks at the IP layer \cite{NS1}. 

The study of traffic flow and congestion in graphs is an important subject of research in graph theory. Furthermore, it is an extremely important topic in network theory and more specifically, in the study of communication networks. One fundamental problem is to understand the traffic congestion under geodesic routing. More precisely, let $\{G_{n}\}_{n=1}^{\infty}$ be a family of graphs where $|G_{n}|=n$. For each pair of nodes in $G_{n}$, consider a unit flow that travels through the minimum path between nodes. Hence, the total traffic flow in $G_{n}$ is equal to $n(n-1)/2$. If there is more than one minimum path for some pair of nodes then the flow splits equally among all the possible geodesic paths. Given a node $v\in G_{n}$ we define $T_{n}(v)$ as the total flow generated in $G_{n}$ passing through the node $v$. In other words, $T_{n}(v)$ is the sum off all the geodesic paths in $G_{n}$ which are carrying flow and contain the node $v$. Let $M_{n}$ be the maximum vertex flow across the network
$$
M_{n}:=\max \Big\{T_{n}(v)\,:\,v\in G_{n}\Big\}.
$$
It is easy to see that for any graph $n-1\leq M_{n}\leq n(n-1)/2$. 

It was observed in many complex networks, man-made or natural, that the typical distance between the nodes is surprisingly small. More formally, as a function of the number of nodes $n$, the average distance between nodes scales typically as $O(\log n)$. Moreover, many of these complex networks, specially communication networks, have high congestion. More precisely, there exists a small number of nodes called the {\it core} where most of the traffic pass through, i.e. $M_{n}=\Theta(n^2)$.

We said that a family of graphs is asymptotically $\delta$--hyperbolic if there exists a non--negative $\delta$ such that for all $n$ sufficiently large the graph $G_{n}$ is $\delta$--Gromov hyperbolic (see \cite{Gromov}). It was observed experimentally in \cite{NS1}, and proved formally in \cite{Tucci1}, that if the family is asymptotically hyperbolic then the maximum vertex congestion scales as $\Theta(n^2)$. In Section \ref{congestion}, we show that for random $d$--regular graphs with $d\geq 3$ the maximum vertex congestion scales as $O(n\log_{d-1}^{3}(n))$, suggesting a non--hyperbolic nature. Furthermore, in Section \ref{non-hyper}, we show that this is indeed the case. More precisely, we show that random $d$--regular graphs are not $\delta$--hyperbolic for any non--negative $\delta$ asymptotically almost surely. This is in contrast with the well--known fact that these graphs are very good expanders and hence, they have a large spectral gap. 

\begin{figure}[htp]
\centering
\includegraphics[width=12cm]{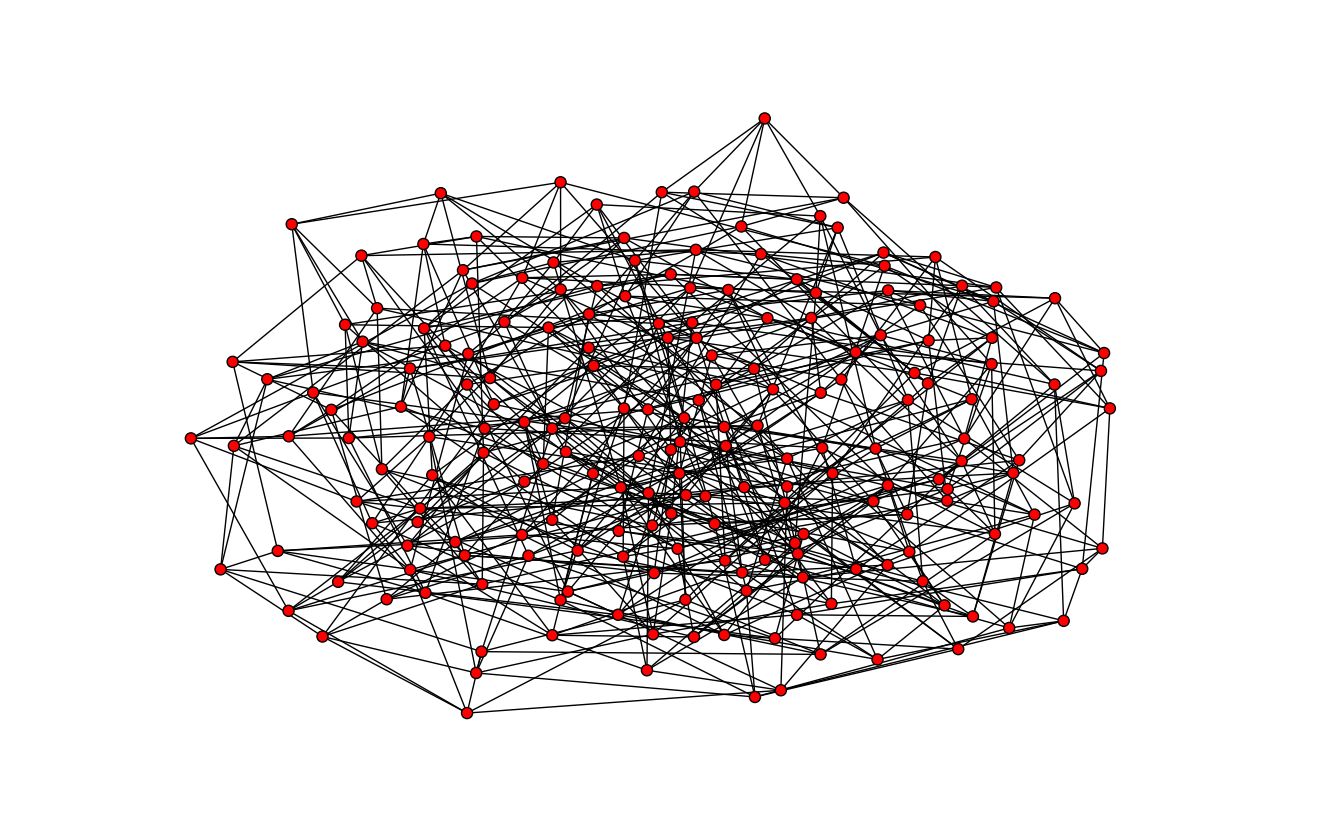}
\caption{A random $6$--regular graph with 200 nodes.}
\label{fig:6_200}
\end{figure}

\section{Maximum Vertex Congestion for Random $d$--regular Graphs}
\label{congestion}

\noindent For every pair of nodes in a graph $G$ there exists at least one shortest path (we assume that each link has unit length) between them. We denote a geodesic between a pair of nodes $a$ and $b$ by $[ab]$. A triangle $abc$ in $C$ is called $\delta$--thin
if each of the shortest paths $[ab]$, $[bc]$ and $[ca]$ is contained
within the $\delta$ neighborhoods of the other two. More
specifically,
\begin{equation}
[ab] \subseteq \mathcal{N}([bc],\delta) \cup \mathcal{N}([ca],\delta). 
\end{equation}
and similarly for $[bc]$ and $[ca]$.
A triangle $abc$ is $\delta$--fat if $\delta$ is the smallest $\delta$
for which $abc$ is $\delta$--thin. The notion of (coarse) Gromov
hyperbolicity (see \cite{Gromov}) is then defined as follows.
\begin{df}
A metric graph is $\delta$--hyperbolic if all geodesic
triangles are $\delta$--thin, for some fixed non-negative $\delta$.
\end{df}
\noindent It is clear that all tree graphs are $\delta$--hyperbolic, with $\delta = 0$. We also observe that all finite graphs are $\delta$--hyperbolic
for large enough $\delta$, e.g., by letting $\delta$ to be equal to the diameter of the graph.

In this Section, we explore the maximum vertex congestion with geodesic routing for random $d$--regular graphs. As discussed in the introduction, Gromov hyperbolic graphs have congestion of the order $\Theta(n^2)$. In particular, any $k$--regular tree (also called a Bethe lattice) has highly congested nodes. More precisely, the following result was proved in \cite{Tucci1}.
\begin{prop}
For a $k$--regular tree with $n$ nodes then 
\begin{equation}
M_{n} = \frac{k-1}{2k}\cdot(n-1)^2+n-1.
\end{equation}
Furthermore, the maximum congestion occurs at the root.
\end{prop}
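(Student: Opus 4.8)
The plan is to exploit the defining feature of a tree: between any two nodes there is a \emph{unique} geodesic, so no flow ever splits and $T_n(v)$ is simply the number of unordered pairs $\{a,b\}$ whose unique path contains $v$. First I would reduce the entire count to the combinatorics of deleting $v$. Removing a vertex $v$ of degree $d_v$ disconnects the tree into $d_v$ components of sizes $c_1,\dots,c_{d_v}$ with $\sum_i c_i = n-1$. The path $[ab]$ passes through $v$ precisely when $a$ and $b$ lie in \emph{distinct} components, and in addition each of the $n-1$ paths having $v$ as an endpoint trivially contains $v$. Hence
\begin{equation}
T_n(v) = \sum_{i<j} c_i c_j + (n-1) = \frac12\Big((n-1)^2 - \sum_i c_i^2\Big) + (n-1).
\end{equation}
This identity already explains the stated bound $M_n \ge n-1$ and converts the whole problem into \emph{minimizing} $\sum_i c_i^2$ over the choice of $v$.

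Next I would bound $\sum_i c_i^2$ from below. By the power-mean (equivalently Cauchy--Schwarz) inequality, a vertex of degree $d_v$ satisfies $\sum_i c_i^2 \ge (n-1)^2/d_v$, with equality iff all components have equal size. Since the tree is $k$-regular we have $d_v \le k$ for every $v$, so $\sum_i c_i^2 \ge (n-1)^2/k$ and therefore
\begin{equation}
T_n(v) \le \frac12\Big((n-1)^2 - \frac{(n-1)^2}{k}\Big) + (n-1) = \frac{k-1}{2k}(n-1)^2 + (n-1).
\end{equation}
It then remains to check that the root attains this bound: its $k$ incident edges lead to $k$ isomorphic subtrees, so the component sizes are all equal to $(n-1)/k$, and substituting $\sum_i c_i^2 = k\cdot((n-1)/k)^2 = (n-1)^2/k$ into the identity reproduces the claimed value exactly.

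Finally, for the ``furthermore'' assertion I would show the root is the \emph{only} vertex where both equality conditions hold at once. Equality in the degree bound forces $d_v = k$, ruling out the leaves, while equality in the power-mean bound forces a perfectly balanced split. Any non-root vertex $v$, however, produces one distinguished component --- the ``parent side'' containing the root and everything above $v$ --- whose size $n-1-(k-1)t$ strictly exceeds the common size $t$ of the $k-1$ truncated subtrees hanging below $v$ (because the full tree has $n-1=kS$ with $S>t$). Its split is therefore never balanced, so $\sum_i c_i^2 > (n-1)^2/k$ strictly and $T_n(v) < T_n(\mathrm{root})$. I expect this last balance comparison to be the only genuinely delicate point: one must argue cleanly that no interior vertex of full degree $k$ can match the root's symmetric splitting, and that is where the real work of the proof lies.
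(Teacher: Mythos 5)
Your proof is correct and complete. Note that the paper itself does not prove this proposition --- it simply quotes it from the reference [Baryshnikov--Tucci], so there is no in-paper argument to compare against; your write-up supplies a genuine proof. The route you take is the natural one and it works: uniqueness of geodesics in a tree reduces $T_n(v)$ to the pair count $\sum_{i<j}c_ic_j+(n-1)$ over the components created by deleting $v$, the identity $\sum_{i<j}c_ic_j=\tfrac12\bigl((n-1)^2-\sum_i c_i^2\bigr)$ together with Cauchy--Schwarz and $d_v\le k$ gives the upper bound $\tfrac{k-1}{2k}(n-1)^2+(n-1)$, and the root attains it because its $k$ branches have equal size $(n-1)/k$ (an integer for a Bethe lattice truncated at uniform depth, since $n-1=k\sum_{j=0}^{L-1}(k-1)^j$). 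Two small remarks. First, the ``furthermore'' clause only asserts that the maximum is attained at the root, which already follows from the bound plus the equality check; your strictness argument for non-root vertices is extra credit, and it is the one place where you implicitly use that the tree is the \emph{balanced} truncation (all leaves at the same depth), so that every subtree hanging below a depth-$j\ge 1$ vertex has size $t<(n-1)/k$ and the parent-side component is strictly larger. You should state that hypothesis explicitly, since for an unbalanced ``$k$-regular tree'' the root need not be the unique maximizer. Second, it is worth saying in one line why flow splitting, which appears in the paper's definition of $T_n(v)$, is a non-issue here: geodesics in a tree are unique, so each pair contributes exactly $1$ or $0$ to $T_n(v)$.
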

Note that trees are some of the most congested graphs one can consider. The reason is that much of the traffic must pass through the root of the tree. 

In what follows we prove the main result of this Section but we first need the following results.
\begin{lem}\label{LL}
Let $G$ be a graph with bounded geometry, i.e. $\sup_{v\in G} \mathrm{deg}(v)\leq \Delta<\infty$. Then for every $v\in G$, the traffic flow passing through $v$ satisfies
\begin{equation}
T(v)\leq \Delta^2(\Delta-1)^{D-2}\cdot D^2
\end{equation}
where $D=\mathrm{diam}(G)$.
\end{lem}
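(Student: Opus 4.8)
The plan is to bound $T(v)$ by a pure \emph{counting} argument that simply ignores the fractional splitting of flow. Since the single unit of flow sent between a pair of distinct vertices $\{a,b\}$ is split among all the geodesics joining them, the fraction of that unit routed through $v$ is at most $1$. Consequently
$$
T(v)\;\le\;N(v):=\#\big\{\{a,b\}\subseteq G:\ v\text{ lies on some geodesic }[ab]\big\},
$$
and it suffices to estimate $N(v)$.

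The key geometric observation is that if $v$ lies on a geodesic $[ab]$ then $d(a,v)+d(v,b)=d(a,b)\le D$. Writing $i=d(a,v)$ and $j=d(v,b)$, every pair counted by $N(v)$ determines a pair of radii $(i,j)$ with $i+j\le D$; the endpoint cases $a=v$ or $b=v$ (that is, $i=0$ or $j=0$) only contribute the lower-order count $\sum_{r=1}^{D}|S_r(v)|$, which I absorb at the end. Before the count I would record the standard consequence of bounded degree: the sphere $S_r(v)=\{u:d(u,v)=r\}$ satisfies $|S_r(v)|\le \Delta(\Delta-1)^{r-1}$ for $r\ge 1$, obtained by a breadth-first exploration from $v$ in which the root has at most $\Delta$ neighbours and every subsequent vertex contributes at most $\Delta-1$ unvisited neighbours.

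Next I would classify the interior pairs by their radius pair $(i,j)$ with $i,j\ge 1$. For a fixed $(i,j)$ the number of admissible pairs $\{a,b\}$ is at most $|S_i(v)|\cdot|S_j(v)|\le \Delta^2(\Delta-1)^{i+j-2}$, and since $i+j\le D$ and $\Delta-1\ge 1$ this is at most $\Delta^2(\Delta-1)^{D-2}$. As $(i,j)$ ranges over at most $D\times D=D^2$ possibilities, summing yields $N(v)\le D^2\,\Delta^2(\Delta-1)^{D-2}$, which is the claimed inequality; the crude count of $D^2$ radius pairs, together with the factor-$2$ slack between ordered and unordered pairs, comfortably absorbs the negligible endpoint contribution.

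I do not expect a genuine obstacle here: the argument is a robust ``volume of balls in a bounded-degree graph'' estimate, and each of the crude steps — replacing the fractional flow by $1$, bounding $|S_r(v)|$ by the tree growth $\Delta(\Delta-1)^{r-1}$, and bounding the number of radius pairs by $D^2$ — leaves slack, so the stated bound follows without delicate bookkeeping. The only points that require a little care are making explicit the flow-fraction bound (splitting flow among competing geodesics can only decrease what passes through $v$) and the off-by-one in the sphere-growth estimate.
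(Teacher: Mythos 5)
Your proposal is correct and follows essentially the same route as the paper: decompose $G$ into spheres $S_k(v)$, bound $|S_k(v)|\le \Delta(\Delta-1)^{k-1}$, observe that only pairs with $d(a,v)+d(v,b)\le D$ can route through $v$, and sum over the at most $D^2$ radius pairs. Your treatment is if anything slightly more careful than the paper's (you make explicit the flow-splitting bound and the endpoint cases), but there is no substantive difference.
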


\begin{proof}
Let $v\in G$ and define $S_{k}:=\{x\in G: d(v,x)=k\}$. Then it is clear that $G =\{v\}\cup\bigcup_{p=1}^{D}S_{p}$ and moreover $|S_{k}|\leq \Delta(\Delta-1)^{k-1}$. Furthermore,
$$
T(v)\leq \sum_{k+l\leq D}{|S_{k}|\cdot|S_{l}|}
$$
where the inequality is coming from the fact that if $k+l>D$ then the geodesic path between a node in $S_{k}$ and a node in $S_{l}$ does not pass through $v$. Hence,
$$
T (v)\leq \sum_{k+l\leq D}{\Delta^2(\Delta-1)^{k+l-2}}\leq \Delta^2(\Delta-1)^{D-2}D^2.
$$
\end{proof}

It was proved in \cite{BVega} that the diameter of a random $d$--regular satisfies the following upper bound almost surely.
\begin{theorem}\label{Boll}
For $d\geq 3$ and sufficiently large $n$, a random $d$--regular graph $G$ with $n$ nodes has diameter at most  
$$
D(G)\leq \log_{d-1}(n) + \log_{d-1}(\log_{d-1}(n))+C
$$
where $C$ is a fixed constant depending on $d$ and independent on $n$.
\end{theorem}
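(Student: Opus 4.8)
Although Theorem \ref{Boll} is quoted from \cite{BVega}, let me outline the route I would take to establish it. The plan is to work in the configuration (pairing) model, where a uniformly random perfect matching on $dn$ half-edges ($d$ half-edges per vertex) produces a random $d$--regular multigraph, and to use the standard fact that this multigraph is simple with probability bounded below by a positive constant; hence any event holding asymptotically almost surely in the pairing model also holds a.a.s.\ for the uniform random simple $d$--regular graph. The whole problem then reduces to a lower bound on the speed at which balls grow: an upper bound on the diameter is exactly the statement that from every vertex the neighborhoods expand fast enough to exhaust the graph.

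First I would analyze the breadth-first exploration from a fixed vertex $v$, writing $S_{k}$ for the sphere of radius $k$ and exposing the pairing one boundary half-edge at a time. As long as the number of half-edges already revealed is $o(n)$, each newly exposed half-edge lands on a fresh vertex with conditional probability $1-O(\text{revealed}/n)$, so the sphere sizes stochastically dominate a supercritical branching process with mean offspring $d-1$. A bounded-differences concentration argument applied to the exploration would then show that, with failure probability at most $n^{-2}$, the boundary reaches size at least $m:=C_{2}\sqrt{n\log n}$ by radius
$$r^{-} \;=\; \tfrac12\big(\log_{d-1} n + \log_{d-1}\log_{d-1} n\big) + C_{1},$$
since $(d-1)^{r^{-}} \asymp \sqrt{n\log n} \asymp m$. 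A union bound over the $n$ choices of $v$ keeps the growth estimate uniform over all starting vertices.

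The second ingredient is a birthday-type collision estimate. Given two vertices $u,v$, I would expose $B(u,r^{-})$ and then $B(v,r^{-})$; conditioned on both boundary spheres having size at least $m$, the probability that no half-edge of $\partial B(u,r^{-})$ is matched to one of $\partial B(v,r^{-})$ is at most $(1-m/(dn))^{m}\le \exp(-m^{2}/(dn)) = n^{-C_{2}^{2}/d}$, which is below $n^{-3}$ once $C_{2}$ is chosen large. This is precisely where the $\log_{d-1}\log_{d-1} n$ term is forced: beating the union bound over all $\binom{n}{2}$ pairs requires boundaries of size $\sqrt{n\log n}$ rather than merely $\sqrt{n}$, and inflating the target sphere size by this $\sqrt{\log n}$ factor costs an extra $\tfrac12\log_{d-1}\log_{d-1} n$ in the radius, which doubles to the stated term. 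A final union bound over all pairs yields that a.a.s.\ every $u,v$ are joined by a path through the two boundaries, so $\mathrm{diam}(G)\le 2r^{-}+1$, which is the claimed bound with $C=2C_{1}+1$.

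I expect the main obstacle to be the uniform control of the ball-growth step. The pairing exploration has genuine dependencies, since revealing one match reshapes the conditional law of the rest, so one must carefully confine the branching comparison to the $o(n)$-revealed regime, quantify the rare events where the boundary fails to multiply by nearly $d-1$ (short cycles and repeated vertices), and push the per-vertex failure probability below $n^{-1}$ so that it survives the union bound over all vertices. The collision step, by contrast, is a soft birthday computation once the boundaries are known to be large.
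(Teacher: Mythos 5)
First, a point of reference: the paper contains no proof of Theorem \ref{Boll} at all --- it is imported verbatim from \cite{BVega} --- so there is no internal argument to compare yours against. Your outline is, in substance, the strategy of the cited source itself: pass to the configuration model, grow balls to radius $r^{-}\approx\tfrac12(\log_{d-1}n+\log_{d-1}\log_{d-1}n)$ so that spheres have size $\Theta(\sqrt{n\log n})$, apply a birthday-type collision bound to each pair of spheres, and finish with union bounds. The radius arithmetic, the explanation of why the $\log_{d-1}\log_{d-1}n$ term is forced by the union bound over pairs, and the final assembly $\mathrm{diam}(G)\le 2r^{-}+1$ are all correct.

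The one step whose proposed justification would not survive as written is the uniform ball-growth bound. You invoke a bounded-differences concentration argument to get per-vertex failure probability $n^{-2}$ for the event $|\partial B(v,r^{-})|\ge C_{2}\sqrt{n\log n}$. But the obstruction to tree-like growth at small radii is not the fluctuation of a concentrated functional; it is the presence of surplus edges (short cycles) inside the ball, and the probability that a fixed vertex has a surplus edge within radius $k$ is of order $(d-1)^{2k}/n$ --- far larger than $n^{-2}$ for the relevant radii, and $\Theta(1)$ once $k$ approaches $\tfrac12\log_{d-1}n$. No concentration inequality can push the failure probability of ``the sphere multiplies by essentially $d-1$ at every step'' below that. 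The standard repair, and the one used in \cite{BVega}, is structural rather than per-vertex probabilistic: show that a.a.s.\ \emph{every} ball of radius $\alpha\log_{d-1}n$ with $\alpha<\tfrac12$ contains at most a bounded number $s$ of surplus edges (the probability of $\ge s$ such edges is $O(n^{(2\alpha-1)s})$, which beats the union bound over $n$ vertices once $s$ is a large enough constant), and observe that each surplus edge costs only a bounded factor in the sphere size, so every sphere still satisfies $|S_{k}|\ge c(d,s)(d-1)^{k}$; the lost constant is absorbed into $C$. Beyond radius $\alpha\log_{d-1}n$ the boundary already has polynomial size and your concentration argument does apply, since each further collision then causes only an $n^{-\alpha}$ relative loss. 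You correctly flag this as the main obstacle, but the tool you name for it is the wrong one; with the surplus-edge count substituted, the rest of your argument goes through and yields the stated bound.
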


Now we are ready to prove the main Theorem of this Section.

\begin{theorem}
Let $G$ be a random $d$--regular graph with $d\geq 3$ and $n$ nodes. Then the maximum vertex flow with geodesic routing on $G$ is smaller than
\begin{equation}
M_n\leq d^{C}n \log_{d}^3(n) + o(n\log_{d}^3(n)).
\end{equation}
asymptotically almost surely as $n\to\infty$.
\end{theorem}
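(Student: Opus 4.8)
The plan is to combine the two preparatory results directly: the deterministic per-vertex flow bound of Lemma \ref{LL} supplies the structural estimate, while the a.a.s.\ diameter bound of Theorem \ref{Boll} supplies the only probabilistic ingredient. Since $G$ is $d$--regular every vertex has degree exactly $d$, so Lemma \ref{LL} applies with $\Delta = d$ and yields, for \emph{every} realization of $G$ and uniformly in $v$,
\begin{equation}
M_n = \max_{v\in G} T(v) \leq d^2(d-1)^{D-2}D^2,
\end{equation}
where $D = \mathrm{diam}(G)$. At this point the entire problem is reduced to controlling $D$, which is precisely what Theorem \ref{Boll} provides.

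Next I would condition on the a.a.s.\ event that $D \leq \log_{d-1}(n) + \log_{d-1}(\log_{d-1}(n)) + C$. The main computation is to expand $(d-1)^{D}$ using the elementary identities $(d-1)^{\log_{d-1}(n)} = n$ and $(d-1)^{\log_{d-1}(\log_{d-1}(n))} = \log_{d-1}(n)$, which give
\begin{equation}
(d-1)^{D-2} \leq \frac{(d-1)^{C}}{(d-1)^{2}}\, n\,\log_{d-1}(n).
\end{equation}
For the remaining factor I would note that the leading term of $D$ is $\log_{d-1}(n)$ while the other two summands are $o(\log_{d-1}(n))$, so that $D^2 = \log_{d-1}^{2}(n)\,(1+o(1))$. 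Substituting both estimates into the flow bound collapses everything to $M_n \leq \kappa(d)\, n\, \log_{d-1}^{3}(n)\,(1+o(1))$ for a constant $\kappa(d)$ depending only on $d$.

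Finally I would change the logarithmic base from $d-1$ to $d$ via $\log_{d-1}(n) = (\ln d / \ln(d-1))\,\log_d(n)$, which merely multiplies $\kappa(d)$ by a further $d$--dependent constant. Absorbing all the resulting $d$--dependent factors (the $d^2$, the $(d-1)^{C-2}$, and the base-change constant) into a single factor, which one may crudely bound by an expression of the form $d^{C}$ after relabelling the constant, gives $M_n \leq d^{C} n \log_{d}^{3}(n) + o(n\log_d^{3}(n))$ a.a.s. The argument is essentially bookkeeping; the only points that require care are (i) keeping the ``asymptotically almost surely'' quantifier confined to the single invocation of Theorem \ref{Boll}, the flow bound itself being deterministic, and (ii) verifying that the cross terms arising from squaring $D$ and from the lower--order diameter corrections are genuinely swallowed by the stated $o(n\log_d^{3}(n))$ error rather than contributing to the leading constant.
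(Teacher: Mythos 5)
Your proposal is correct and follows essentially the same route as the paper: the deterministic bound of Lemma \ref{LL} with $\Delta=d$, combined with the a.a.s.\ diameter bound of Theorem \ref{Boll}, followed by bookkeeping and a change of logarithmic base. If anything, your bookkeeping is slightly more careful than the paper's own, which first relaxes $(d-1)^{D-2}$ to $d^{D}$ and then evaluates $d^{\log_{d-1}n}$ as though it were $n$; keeping the base $d-1$ as you do, so that $(d-1)^{\log_{d-1}n}=n$ exactly, is the clean way to land on the stated $O(n\log^{3}n)$ bound.
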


\begin{proof}
Let $v$ be a vertex in $G$ then by applying Lemma \ref{LL} we obtain
$$
T(v) \leq d^{D(G)}D(G)^2.
$$
Since
$$
D(G)\leq \log_{d-1}(n) + \log_{d-1}\log_{d-1}(n)+ C
$$
with high probability, we see that
\begin{eqnarray*}
T(v) &\leq & (\log_{d-1}(n) + \log_{d-1}\log_{d-1}(n)+ C)^{2}d^{\log_{d-1}(n) + \log_{d-1}\log_{d-1}(n)+ C}\\
& = & d^{C}n\log_{d-1}^3(n) + o(n\log_{d-1}^3(n)).
\end{eqnarray*}
Now taking the maximum over all the vertices finished the argument.
\end{proof}

In figure \ref{plot}, we observe the maximum vertex flow for a random $6$--regular graph as a function of the number of vertices. We also see in comparison the function $n^2$ and $n\log(n)$. 
\begin{figure}[htp]
\centering
\includegraphics[width=15cm]{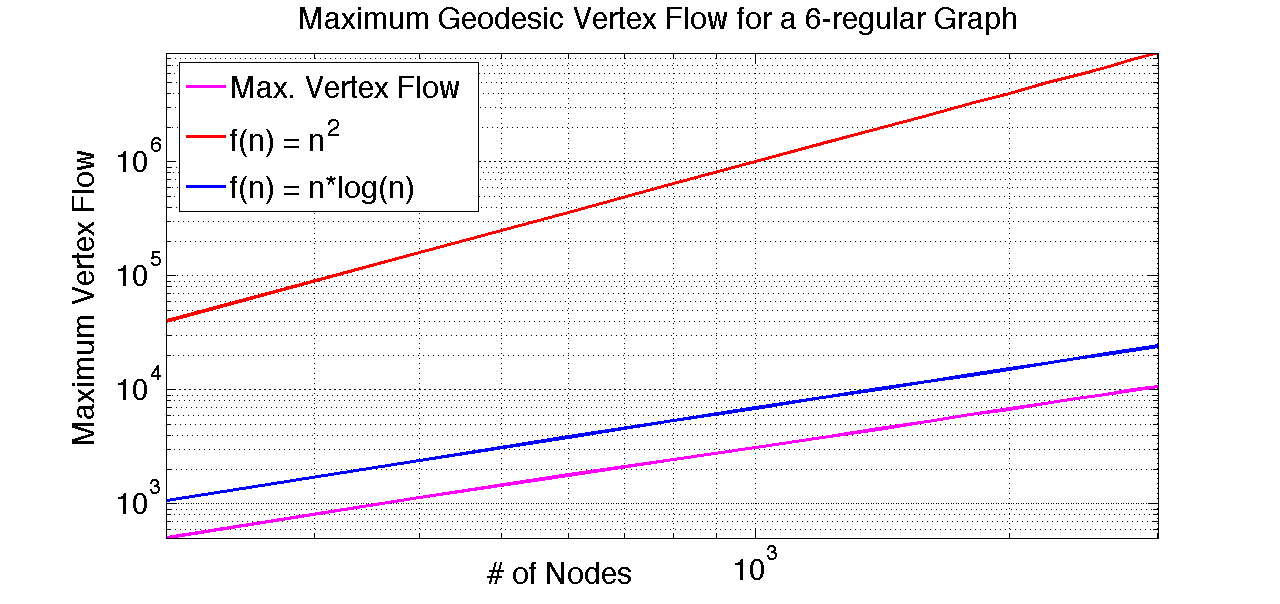}
\caption{Maximum vertex flow for a random $6$--regular graph as a function of the number of nodes $n$ averaged over $20$ realizations for each $n$.}
\label{plot}
\end{figure}

\section{Non--hyperbolicity for Random $d$--regular Graphs}
\label{non-hyper}

\noindent The previous definition of $\delta$--hyperbolicity is equivalent to Gromov's four points condition (see \cite{Gromov}). A graph $G$ satisfies the Gromov's four points condition, and hence it is $\delta$--hyperbolic, if and only if 
\begin{equation}\label{four}
d(x_1,x_3)+d(x_2,x_4)\leq \max\{d(x_1,x_2)+d(x_3,x_4), d(x_1,x_4)+d(x_2,x_3)\} + 2\delta
\end{equation}
for all $x_{1},x_{2},x_{3}$ and $x_{4}$ in $G$. 

\noindent As it was defined in \cite{itai}, an {\em almost geodesic cycle} $C$ in a graph $G$ is a cycle in which for every two vertices $v$ and $w$ in $C$, the distance $d_{G}(u,v)$ is at least $d_{C}(u,v)-e(n)$. Here $d_{G}$ is the distance in the graph and $d_{C}$ is the corresponding distance within the loop $C$. The following result was proved in \cite{itai}.

\begin{theorem}
Let $G$ be a random $d$--regular graph with $n$ nodes and let $\omega(n)$ be a function going to infinity such that $\omega(n)=o(\log_{d-1}\log_{d-1}n)$. Then almost all pair of vertices $v$ and $w$ in $G$ belong to an almost geodesic cycle $C$ with $e(n) = \log_{d-1}\log_{d-1} n + \omega(n)$ and $|C|=2\log_{d-1}n + O(\omega(n))$. 
\end{theorem}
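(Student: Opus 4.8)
The plan is to work throughout in the configuration model $\mathcal{G}(n,d)$, which is contiguous to the uniform random $d$--regular graph, so that any property holding asymptotically almost surely (a.a.s.) in one model transfers to the other. Because the vertices of the configuration model are exchangeable, the probability that a fixed ordered pair $(v,w)$ fails to lie on a cycle with the stated parameters is independent of the pair and equals the expected fraction of ``bad'' pairs; hence by Markov's inequality it suffices to prove that a single fixed pair $(v,w)$ lies on such an almost geodesic cycle with probability $1-o(1)$. Throughout I would reveal the random pairing of half--edges lazily, exposing edges only as a breadth--first search requires them.

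First I would grow the balls $B(v,r)$ and $B(w,r)$ simultaneously by breadth--first search, taking the radius to be $r=\tfrac12\log_{d-1}n+\Theta(\omega(n))$. With this choice each boundary sphere carries of order $(d-1)^{r}$ free half--edges, that is $\sqrt{n}\,(d-1)^{\Theta(\omega(n))}$, which exceeds $\sqrt{n}$ by a factor diverging through the slack $\omega(n)$. A routine first--moment estimate shows that a.a.s. no cycle appears inside either ball up to the smaller radius $r_{0}=\tfrac12(\log_{d-1}n-\log_{d-1}\log_{d-1}n)$, since the expected number of self--intersections there is of order $(d-1)^{2r_{0}}/n=o(1)$; in the outer annulus of depth $r-r_{0}\approx\tfrac12\log_{d-1}\log_{d-1}n$ a sparse collection of short cycles may appear, but these are the only deviations from tree geometry.

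Next I would join the two boundaries. When the remaining half--edges are paired, the number of edges connecting $\partial B(v,r)$ to $\partial B(w,r)$ concentrates around $|\partial B(v,r)|\cdot|\partial B(w,r)|/(dn)$, which diverges by the previous step; a Poisson (second--moment) argument then gives a.a.s. at least two such connecting edges. Since each boundary spreads its of order $(d-1)^{r}$ half--edges over essentially that many distinct subtrees of $v$ and of $w$, I can select two connecting edges whose endpoints lie in distinct subtrees at both ends, so that the two tree--geodesics from $v$ and from $w$ to their endpoints are internally disjoint. Concatenating them yields two internally disjoint $v$--$w$ paths, each of length $2r+1$, whose union is a cycle $C$ with $|C|=4r+2=2\log_{d-1}n+O(\omega(n))$.

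Finally I would verify the almost geodesic property, and this is where I expect the real difficulty. If two vertices $a,b\in C$ have their shorter cyclic arc contained in one of the tree balls $B(v,r_{0})$ or $B(w,r_{0})$, the tree structure forces $d_{G}(a,b)$ to equal that arc, so $d_{G}(a,b)=d_{C}(a,b)$ and nothing is lost. The only pairs that could violate the inequality are those whose short arc crosses a connecting edge, and hence passes through both annular regions where tree--likeness fails; the amount by which a graph shortcut can beat the cyclic distance is then bounded by the combined depth of these two annuli plus the slack, namely $\log_{d-1}\log_{d-1}n+\omega(n)=e(n)$. The hard part will be to make this last bound rigorous and uniform over all $\Theta(|C|^{2})$ pairs on the cycle: one must show a.a.s. that no chord between the two paths is shorter than the local tree geometry permits, which reduces to a union bound ruling out atypically dense subgraphs on $O(\log_{d-1}n)$ vertices in $\mathcal{G}(n,d)$. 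Calibrating $r$ so that the connection step still succeeds while the shortcut saving stays within $e(n)$ is the delicate balance at the heart of the proof.
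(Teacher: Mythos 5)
First, a point of context: the paper does not prove this statement at all --- it is quoted verbatim as a known result of Benjamini, Hoppen, Ofek, Pralat and Wormald (reference \cite{itai}) and used as a black box in Section \ref{non-hyper}. So there is no in-paper proof to compare against; your proposal has to be judged on its own merits. As a plan it follows the standard template (configuration model, lazy BFS exposure of the balls around $v$ and $w$ to radius $r=\tfrac12\log_{d-1}n+\Theta(\omega(n))$, two connecting edges in distinct subtrees giving two internally disjoint $v$--$w$ paths whose union is $C$ with $|C|=2\log_{d-1}n+O(\omega(n))$), and that part is a reasonable outline. But the proposal does not prove the one property that makes the theorem nontrivial, namely that $C$ is \emph{almost geodesic}, and the partial reasoning you give for it contains an error.

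The error is the dichotomy in your last paragraph. You claim that if $a,b\in C$ have their short cyclic arc inside the tree ball $B(v,r_0)$, then ``the tree structure forces $d_G(a,b)$ to equal that arc.'' It does not: the tree structure only controls paths that stay inside $B(v,r_0)$. If $a$ and $b$ sit at depths $j$ and $k$ in different branches below $v$, a path exiting the ball costs only $(r_0-j)+(r_0-k)$ to reach the boundary twice, which is \emph{smaller} than the arc length $j+k$ whenever $j+k>r_0$; a shortcut through the unexplored part of the graph could then make $d_G(a,b)$ far less than $d_C(a,b)$. So essentially every pair with $d_C(a,b)>e(n)$ --- not just the pairs straddling a connecting edge --- needs the global no-shortcut argument, and that argument is exactly the step you defer. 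Moreover, the union bound you gesture at does not obviously close: for a near-antipodal pair on $C$ one must exclude $d_G(a,b)<\log_{d-1}n-\log_{d-1}\log_{d-1}n-O(\omega(n))$, whose per-pair probability is only of order $(d-1)^{-\omega(n)}/\log_{d-1}n$; summed over the $\Theta(\log_{d-1}n)$ near-antipodal pairs (let alone all $\Theta(\log_{d-1}^2 n)$ pairs) this gives a quantity of order $(d-1)^{O(\omega(n))-\omega(n)}$, which need not tend to zero precisely because the hypothesis forces $\omega(n)=o(\log_{d-1}\log_{d-1}n)$. Closing this requires a genuinely different device (e.g., exploiting that a single shortcut creates shortcuts for a whole interval of pairs, or conditioning the exploration so that chords between the two arcs are controlled), and without it the proposal is an outline of the construction rather than a proof of the theorem.
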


\noindent Now we are ready to prove the main Theorem of this Section.

\begin{theorem}
Let $G$ be a random $d$--regular graph with $d\geq 3$ and $n$ nodes. Then for every non--negative $\delta$ the graph $G$ is not $\delta$--hyperbolic asymptotically almost surely.
\end{theorem}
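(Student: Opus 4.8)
The plan is to use the almost geodesic cycle theorem to produce, asymptotically almost surely, a four-point configuration that violates Gromov's four-point condition (\ref{four}) for any fixed $\delta$. The intuition is geometric: a large almost geodesic cycle behaves like a round circle, and on a circle the four-point condition fails by an amount proportional to the diameter of the circle, which here grows like $\log_{d-1}n$ and hence eventually exceeds any fixed $2\delta$.

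\medskip

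First I would fix $\delta\geq 0$ and invoke the almost geodesic cycle theorem with a slowly growing $\omega(n)$, obtaining (for almost all pairs, hence certainly for some pair) an almost geodesic cycle $C$ with length $|C| = 2\log_{d-1}n + O(\omega(n))$ and error $e(n) = \log_{d-1}\log_{d-1}n + \omega(n)$, which is negligible compared to $|C|$. Writing $L := |C|$, I would then select four vertices $x_1, x_2, x_3, x_4$ placed (cyclically, in this order) at the four ``quarter points'' of $C$, so that consecutive selected vertices are separated by arc-distance $d_C \approx L/4$ and opposite ones by $d_C \approx L/2$. The key step is to convert these cycle-distances $d_C$ into graph-distances $d_G$: the almost-geodesic property gives the lower bound $d_G(x_i,x_j) \geq d_C(x_i,x_j) - e(n)$, while the trivial bound $d_G(x_i,x_j)\leq d_C(x_i,x_j)$ holds because $C$ is a path in $G$. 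Thus every pairwise graph distance equals its cycle distance up to an additive error $O(e(n))$.

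\medskip

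Next I would plug these estimates into (\ref{four}). For the antipodal-type pairing, the left side $d_G(x_1,x_3)+d_G(x_2,x_4)$ is approximately $L/2 + L/2 = L$, while each term on the right, $d_G(x_1,x_2)+d_G(x_3,x_4)$ and $d_G(x_1,x_4)+d_G(x_2,x_3)$, is approximately $L/4 + L/4 = L/2$. Hence the inequality (\ref{four}) would force
\begin{equation*}
L - O(e(n)) \leq \frac{L}{2} + O(e(n)) + 2\delta,
\end{equation*}
i.e. $L/2 \leq 2\delta + O(e(n))$. Since $L = 2\log_{d-1}n + O(\omega(n))$ grows without bound while $e(n) = o(\log_{d-1}n)$ and $\delta$ is a fixed constant, the right-hand side is eventually dwarfed by the left, yielding a contradiction for all sufficiently large $n$. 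Therefore no fixed $\delta$ can witness hyperbolicity, and this holds asymptotically almost surely.

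\medskip

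The main obstacle I anticipate is purely bookkeeping rather than conceptual: one must verify that the four quarter-point vertices can actually be chosen so that \emph{all six} pairwise cycle-distances are simultaneously close to their nominal $L/4$ or $L/2$ values (the discreteness of $C$ and the $O(\omega(n))$ slack in $|C|$ introduce rounding that must be controlled), and one must confirm that the additive errors $e(n)$ accumulated across the (at most four) distance terms remain $o(L)$ so that the gap $L/2 - 2\delta$ genuinely survives. Both are straightforward once the orders of magnitude are tracked, since $e(n)/L \to 0$; the essential point is simply that the hyperbolicity defect of a geodesic quadrilateral inscribed in a large almost-geodesic cycle scales linearly in the cycle length and therefore cannot be bounded by any constant $2\delta$.
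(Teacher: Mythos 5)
Your proposal is correct and follows essentially the same route as the paper: both invoke the almost geodesic cycle theorem, place four points at the quarter positions of the cycle, convert cycle distances to graph distances up to the additive error $e(n)$, and observe that the antipodal pairing exceeds the other two pairings by roughly $|C|/2 - O(e(n)) \to \infty$, violating the four-point condition for any fixed $\delta$. The bookkeeping concerns you flag (rounding at the quarter points and accumulation of the $e(n)$ errors) are exactly the details the paper handles, and your order-of-magnitude analysis resolves them the same way.
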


\begin{proof}
By the previous theorem there exists and almost geodesic cycle $C$ with $e(n) = \log_{d-1}\log_{d-1} n + \omega(n)$ and $|C|=2\log_{d-1}n + O(\omega(n))$. Let $x_1,x_2,x_3,x_4\in C$ such that roughly $d_{C}(x_1,x_2)=d_{C}(x_2,x_3)=d_{C}(x_3,x_4)=d_{C}(x_4,x_1)=|C|/4$. Let $\gamma_{pq}$ be a geodesic path joining the points $x_{p}$ and $x_{q}$. Then by construction, 
$$
d_{C}(x_p,x_q)-e(n)\leq d_{G}(x_p,x_q)\leq d_{C}(x_p,x_q)
$$
for all $x_q$ and $x_q$. Hence,
\begin{equation*}
|C|/2-2e(n)\leq d_{G}(x_1,x_2)+d_{G}(x_3,x_4)\leq |C|/2
\end{equation*}
and 
\begin{equation*}
|C|/2-2e(n)\leq d_{G}(x_2,x_3)+d_{G}(x_1,x_4)\leq |C|/2.
\end{equation*}
Therefore,
\begin{equation}
\max\{d(x_1,x_2)+d(x_3,x_4), d(x_1,x_4)+d(x_2,x_3)\}\leq |C|/2=\log_{d-1}n + O(\omega(n))/2.
\end{equation}
On the other hand, it is clear that
\begin{equation*}
|C|-2e(n)\leq d_{G}(x_1,x_3)+d_{G}(x_2,x_4)\leq |C|.
\end{equation*}
Therefore,
\begin{equation}
d_{G}(x_1,x_3)+d_{G}(x_2,x_4)\geq 2\log_{d-1}n - 2\log_{d-1}\log_{d-1}n + O(\omega(n)).
\end{equation}
Thus,
\begin{eqnarray*}
d_{G}(x_1,x_3)+d_{G}(x_2,x_4) &-& \max\{d(x_1,x_2)+d(x_3,x_4), d(x_1,x_4)+d(x_2,x_3)\}  \\
& \geq & \log_{d-1}n - 2\log_{d-1}\log_{d-1}n - O(\omega(n))\to\infty.
\end{eqnarray*}
This four points violate the four points condition for all $\delta\geq 0$. Hence, it concludes the proof.
\end{proof}

\vspace{0.3cm}
\noindent {\it Acknowledgement}. This work was funded by NIST Grant No. 60NANB10D128.

\end{document}